\documentclass{amsart}

\usepackage{ifthen}
\usepackage{graphicx}
\usepackage{amssymb}
\usepackage{enumerate}
\usepackage{url}
\newtheorem{anyprop}{Anyprop}[section]

\newtheorem{theorem}[anyprop]{Theorem}

\newtheorem{corollary}[anyprop]{Corollary}

\theoremstyle{definition}

\newtheorem{example}[anyprop]{Example}


\newcommand  {\Spec}    {\operatorname{Spec}}

\theoremstyle{remark}

\numberwithin{equation}{section}

\usepackage{amscd}
\usepackage{amssymb}
\input xy
\xyoption{all}

\setlength{\textwidth}{28pc} \setlength{\textheight}{43pc}

\begin{document}
\title[ON SPECIAL PROPERTIES INVOLVING THE NOETHERIAN CONDITION]
{ON PRESERVATION PROPERTIES AND A SPECIAL ALGEBRAIC CHARACTERIZATION OF SOME STRONGER FORMS OF THE NOETHERIAN CONDITION}


\author[Danny A. J. G\'omez-Ram\'irez]{Danny Arlen de Jes\'us G\'omez-Ram\'irez}
\author[Juan D. Velez]{Juan D. V\'elez}
\author[Edisson Gallego]{Edisson Gallego}
\address{Vienna University of Technology, Institute of Discrete Mathematics and Geometry,
wiedner Hauptstaße 8-10, 1040, Vienna, Austria.}
\address{Universidad Nacional de Colombia, Escuela de Matem\'aticas,
Calle 59A No 63 - 20, N\'ucleo El Volador, Medell\'in, Colombia.}
\address{University of Antioquia, Calle 67 \# 53-108, Medell\'in, Colombia.}
\email{daj.gomezramirez@gmail.com}
\email{jdvelez@unal.edu.co}
\email{egalleg@gmail.com}



\begin{abstract}
We give an elementary proof prove of the preservation of the Noetherian condition for commutative rings with unity $R$ having at least one finitely generated ideal $I$ such that the quotient ring is again finitely generated, and $R$ is $I-$adically complete. Moreover, we offer as a direct corollary a new elementary proof of the fact that if a ring is Noetherian then the corresponding ring of formal power series in finitely many variables is Noetherian. In addition, we give a counterexample showing that the `completion' condition cannot be avoided on the former theorem. Lastly, we give an elementary characterization of Noetherian commutative rings that can be decomposed as a finite direct product of fields.

\end{abstract}

\maketitle

\noindent Mathematical Subject Classification (2010): 13E05

\smallskip

\noindent Keywords: Noetherian condition, direct product of fields
\newline\indent

\section*{Introduction}
Among the most studied rings in commutative algebra are the Noetherian ones, i.e., commutative rings with unity such that every ideal can be finitely generated. Moreover, one needs to increase a little bit the level of formal sophistication in order to find simple examples of non-Noetherian structures such as the ring of polynomials in countable many variables over a field or the ring of algebraic integers. So, (elementary) results preserving and implying the Noetherian condition after the application of standard algebraic operations (such as completion, quotient, localization, etc.) are quite useful (see \cite{kunz}, \cite{eisenbudSubrings}, \cite{richman} and \cite{jategaonkar}).
Similarly, new characterizations of (stronger forms of the) Noetherian condition are oft very valuable for enlightening our understanding of what finite generation in (non-)commutative algebra means (see \cite{van}, \cite{vamos}, \cite{bass} and \cite{he}).

Finally, we will prove in an elementary way two results concerning, on the one hand, the preservation of the Noetherian condition for a commutative ring with unity $R$ having at least one finitely generated ideal $I$ such that the quotient ring is again finitely generated and $R$ is $I-$adically complete. In addition, we also offer as a corollary a new elementary proof of the fact that the ring of formal power series in finitely many variables is Noetherian, if its ring of coefficient so is, and we give a counterexample showing that the `completion' condition cannot be avoided. On the other hand, we give a quite simple algebraic characterization of Noetherian commutative rings that can be decomposed as a finite direct product of fields.

\section{Preservation Properties of the Noetherian Condition involving a Special Class of Finitely Generated Ideals}

In this section, by a ring we will mean a commutative ring with identity,
not necessarily Noetherian.

Let $R$ be a ring, and let $I$ be an ideal of $R$. We will show that
if $I$ is finitely generated and $R/I$ is Noetherian, then the completion
of $R$ with respect to the $I$-adic topology is also Noetherian. In general
one cannot expect $R$ to be Noetherian under these hypothesis, as shown in
Example \ref{contraejemplo} below. This result can be regarded as a
generalization of the following well known corollary to the Cohen Structure
Theorems (\cite{cohen}), (\cite[pag. 189,201]{eisenbud}): if $(R,m)$ is a
quasilocal complete ring (with respect to the $m$-adical topology) then $R$
is Noetherian if $m$ is finitely generated. The Noetherian property is
deduced from the fact that under these hypothesis $R$ is a quotient of a
power series ring over a complete discrete valuation ring. We observe that
this result can be recovered immediately from Theorem \ref{principal} as the
very special case when $I$ is maximal, since $R/I$ is a field, hence
automatically Noetherian. While Cohen Structure Theorems require some
machinery, the result below is totally elementary.

As a corollary, we deduce a quite elementary new proof of the fact that If $%
A $ is Noetherian, so it is the power series ring $A[[x_{1},\ldots ,x_{n}]].$

\begin{theorem}
\label{principal}Let $R$ be a ring, and let $I$ be a finitely generated
ideal of $R$. Suppose that $R$ is $I$-adically complete, and that $R/I$ is
Noetherian. Then $R$ is also Noetherian.
\end{theorem}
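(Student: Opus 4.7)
The plan is to show that every ideal $J \subseteq R$ is finitely generated by passing to the associated graded ring $\operatorname{gr}_I(R) := \bigoplus_{n \ge 0} I^n/I^{n+1}$, extracting finitely many homogeneous generators of the graded submodule attached to $J$, lifting them to $R$, and then using $I$-adic completeness to check that these lifts already generate $J$.

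I would begin with a short induction showing that $R/I^n$ is Noetherian for every $n \geq 1$: the base case $n=1$ is the hypothesis, and the inductive step follows from the short exact sequence $0 \to I^n/I^{n+1} \to R/I^{n+1} \to R/I^n \to 0$ together with the observation that $I^n/I^{n+1}$, viewed as an $R/I$-module, is finitely generated (products of length $n$ in any fixed finite generating set of $I$ suffice). Fixing generators $x_1,\ldots,x_k$ of $I$, the assignment $T_i \mapsto \overline{x}_i \in I/I^2$ induces a surjection $(R/I)[T_1,\ldots,T_k] \twoheadrightarrow \operatorname{gr}_I(R)$, so by the Hilbert basis theorem applied to the polynomial ring over the Noetherian ring $R/I$, the graded ring $\operatorname{gr}_I(R)$ is Noetherian. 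Consequently the graded $\operatorname{gr}_I(R)$-submodule $\bigoplus_n (J \cap I^n)/(J \cap I^{n+1})$ is finitely generated; pick homogeneous lifts $y_1,\ldots,y_m \in R$ with $y_i \in J \cap I^{d_i}$.

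The main step, and the principal obstacle, is showing that $J$ is already generated by $y_1,\ldots,y_m$. Given $a \in J$, one inductively constructs elements $r_i^{(n)} \in I^{n-d_i}$ (with the convention $r_i^{(n)} = 0$ whenever $n < d_i$) such that the remainder
\[
a_{N+1} \;:=\; a - \sum_{n=0}^{N}\sum_{i=1}^{m} r_i^{(n)} y_i \;\in\; J \cap I^{N+1}
\]
at every stage $N$; this is possible because the class of $a_n$ in $(J \cap I^n)/(J \cap I^{n+1})$ decomposes, by graded generation, as a combination of the $\overline{y_i}$ with coefficients in $I^{n-d_i}/I^{n-d_i+1}$, which one then lifts. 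For each $i$, the partial sums $\sum_{n=0}^{N} r_i^{(n)}$ form an $I$-adic Cauchy sequence, so they converge to some $r_i \in R$ by completeness; using $\bigcap_n I^n = 0$ to conclude $\lim_N a_{N+1} = 0$ and passing to the limit gives $a = \sum_i r_i y_i$. The delicate aspects are the careful tracking of the exponents $n-d_i$ and the justification that the interchange of summations is legitimate in the $I$-adic topology; once these are in place the argument concludes and $R$ is Noetherian.
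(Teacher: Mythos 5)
Your proof is correct, but it takes a genuinely different route from the paper's. You give the classical associated-graded argument (essentially Atiyah--Macdonald, Theorem 10.25): $\operatorname{gr}_I(R)$ is a quotient of $(R/I)[T_1,\ldots,T_k]$, hence Noetherian by the Hilbert basis theorem; the graded module of an arbitrary ideal $J$ is therefore finitely generated, and homogeneous lifts of its generators generate $J$ by successive approximation together with completeness and separatedness. The paper avoids graded rings and the Hilbert basis theorem altogether: assuming $R$ is not Noetherian, it invokes Zorn's lemma and Cohen's observation to produce an ideal $P$ maximal among the non-finitely-generated ones, which is necessarily prime; since $R/I$ is Noetherian, $P$ cannot contain $I$, so one picks $z\in I\setminus P$, uses the finite generation of $P+Rz$ to write any $f\in P$ as $\sum_i g_i^{(0)}f_i+zf^{(1)}$ with $f^{(1)}\in P$ (primality of $P$ and $z\notin P$ supply the division step), iterates, and sums the resulting power series in $z$ using completeness. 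Your approach is more structural, yields all ideals at once, and generalizes immediately to ``$R$ complete with $\operatorname{gr}_I(R)$ Noetherian implies $R$ Noetherian''; the paper's is more self-contained (no Hilbert basis theorem, only Cohen's prime criterion) and only ever has to handle the single ideal $P$. Two small remarks on your ``delicate aspects'': the limit interchange needs no closedness of the ideals $I^k$ --- since completeness forces $\bigcap_n I^n=0$, the topology is Hausdorff, and $a$ and $\sum_i r_iy_i$ are both limits of the same sequence of partial sums, so uniqueness of limits finishes the argument; and the preliminary induction showing each $R/I^n$ is Noetherian, while true, is not actually used in the rest of your proof.
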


\begin{proof}
If $R$ were not Noetherian, a standard argument using Zorn's Lemma shows
that there is a maximal ideal $P$ in $R$ with respect to the property of not
being finitely generated, and this ideal is necessarily prime. It is clear
that $P$ does not contain $I,$ otherwise, since $R/I$ is Noetherian, any
lifting of a set of generator for $P/I$ coupled with generators of $I$ would
also generate $P.$ Thus, we may assume that there exist $z\in I$ such that $%
z\notin P.$ By the maximality of $P$, the ideal $P+Rz$ must be finitely
generated. Let $f_{i}+r_{i}z,$ $i=1,\ldots ,n$ be any set of generators,
with $f_{i}\in P.$

We claim that $\{f_{1},\ldots ,f_{n}\}$ is a set of generators for $P.$ Let $%
f$ be any element of $P$. Then, since $f$ is a priory in $P+Rz$ there must
be elements $g_{i}^{(0)}\in R$ such that 
\begin{equation}
f=\sum_{i=1}^{n}g_{i}^{(0)}(f_{i}+r_{i}z)=\sum_{i=1}^{n}g_{i}^{(0)}f_{i}+zf^{(1)},  \label{e1}
\end{equation}%
where $f^{(1)}=\sum_{i=1}^{n}g_{i}^{(0)}r_{i}.$ Hence, $%
f-\sum_{i=1}^{n}g_{i}^{(0)}f_{i}=zf^{(1)}$ is in $P.$ Since we are
assuming $z\notin P,$ and $P$ is prime we deduce $f^{(1)}\in P.$

The same reasoning applied to $f^{(1)}$ yields $f^{(1)}=\sum{i=1}^{n}g_{i}^{(1)}f_{i}+zf^{(2)}$, for some elements $g_{i}^{(1)}\in
R,$ and $f^{(2)}\in P.$ Replacing $f^{(1)}$ in \ref{e1} by the right hand
side of this last equation gives $f=\sum_{i=1}^{n}(g_{i}^{(0)}+zg_{i}^{(1)})f_{i}+z^{2}f^{(2)}.$ Since $%
z^{t}\notin P$ for any $t>0,$ a straightforward induction shows that $f$ can
be written as 
\begin{equation}
f=\sum_{i=1}^{n}(g_{i}^{(0)}+zg_{i}^{(1)}+\cdots
+z^{t}g_{i}^{(t)})f_{i}+z^{t+1}f^{(t+1)},\text{ }  \label{e2}
\end{equation}

for certain elements $g_{i}^{(t)}\in R,$ and $f^{(t+1)}\in P.$ Since $R$ is
complete, $h_{i}=\sum_{t=0}^{\infty }z^{t}g_{i}^{(t)}$ is a well
defined element of $R.$

We now observe that $R$ must be $I$-adically separated, i.e., $%
\bigcap_{t=0}^{\infty }I^{t}$ is just the kernel of the canonical
map $i:R\longrightarrow R^{\widehat{I}}$, which should be the zero ideal, because $i$ is an isomorphism, since we are making
the assumption that $R$ is $I$-adically complete. From this, we deduce that 
$f=\sum_{i=1}^{n}h_{i}f_{i},$ since 

\[f-\sum_{i=1}^{n}h_{i}f_{i}=f-\sum_{i=1}^{n}(g_{i}^{(0)}+zg_{i}^{(1)}+\cdots
+z^{t}g_{i}^{(t)})f_{i}-z^{t+1}\sum_{i=1}^{n}\left(
\sum_{j=0}^{\infty }z^{j}g_{i}^{(j+t+1)}\right) f_{i}\]

\[=z^{t+1}\left(f^{(t+1)}-\sum_{i=1}^{n}\left( \sum_{j=0}^{\infty }z^{j}g_{i}^{(j+t+1)}\right) f_{i} \right),\]

is an element of $I^{t},$ for all $t$.

In conclusion, any element $f\in P$ can be generated by the set $\{f_1,\cdots,f_n\}\subseteq P$. Therefore, $P$ would be finitely generated, which contradicts our former assumption.
\end{proof}

\begin{corollary}
\label{nuevo}If $A$ is Noetherian, so it is $A[[x_{1},\ldots ,x_{n}]].$
\end{corollary}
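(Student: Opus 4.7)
The plan is to apply Theorem \ref{principal} directly, with $R = A[[x_1,\ldots,x_n]]$ and $I$ the ideal $(x_1,\ldots,x_n)R$ generated by the indeterminates. Three hypotheses of the theorem must be verified: (i) $I$ is finitely generated, which is immediate from the choice of generators; (ii) $R/I$ is Noetherian, which follows since the augmentation map $R \to A$ sending each $x_i$ to $0$ is surjective with kernel exactly $I$, so $R/I \cong A$ is Noetherian by hypothesis; and (iii) $R$ is $I$-adically complete.

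The only point requiring actual work is (iii), but this is essentially the defining property of the formal power series ring. I would first identify $I^t$ with the set of power series whose nonzero monomial terms all have total degree $\geq t$, so that $f \in I^t$ iff the homogeneous components of $f$ of degree $< t$ all vanish. A Cauchy sequence $(f_k)_k$ in $R$ with respect to the $I$-adic topology then has the property that, for each fixed monomial $x_1^{a_1}\cdots x_n^{a_n}$, the coefficient sequence in $A$ stabilizes after finitely many indices. Assembling the stabilized coefficients into a formal power series produces the required limit in $R$, which shows that the canonical map $R \to R^{\widehat{I}}$ is an isomorphism.

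With (i), (ii) and (iii) in hand, Theorem \ref{principal} yields that $R = A[[x_1,\ldots,x_n]]$ is Noetherian. I do not expect any genuine obstacle: the corollary just packages the defining properties of the power series ring into the hypotheses of the main theorem, and the entire substantive content has already been absorbed into the proof of Theorem \ref{principal}. As an alternative, one could also proceed by induction on $n$, using the identification $A[[x_1,\ldots,x_n]] = A[[x_1,\ldots,x_{n-1}]][[x_n]]$ and applying Theorem \ref{principal} at each step with the single generator $I = (x_n)$; but the one-shot application above is simpler and avoids any inductive bookkeeping.
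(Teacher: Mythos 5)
Your proposal is correct and follows essentially the same route as the paper: apply Theorem \ref{principal} to $R=A[[x_1,\ldots,x_n]]$ with $I=(x_1,\ldots,x_n)$, noting $R/I\cong A$. The only difference is that you verify $I$-adic completeness directly via the description of $I^t$ as the series with all terms of degree $\ge t$, whereas the paper cites the standard identification of $R$ with the $I$-adic completion of the polynomial ring $A[x_1,\ldots,x_n]$; both justifications are fine.
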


\begin{proof}
Let $R=A[[x_{1},\ldots ,x_{n}]]$ and $I=(x_{1},\ldots ,x_{n})$. A standard
argument (\cite[pag. 192]{eisenbud}), (\cite[pag. 61]{matcomrintheo}) shows that 
$R=B^{\widehat{I}}$ where $B=A[x_{1},\ldots ,x_{n}]$. Since $R/IR\simeq A$
is Noetherian, the previous theorem gives that so it is $R.$
\end{proof}

In general, if $R$ is not complete with respect to the $I$-adic topology, it
is not true that $R$ is Noetherian under the hypothesis of $I$ being
finitely generated and $R/I$ being Noetherian, not even in the case where $I$
is maximal, as the following example shows.

\begin{example}
\label{contraejemplo}

\emph{Let }$\mathbb{N}$ \emph{denote the set of natural
numbers, and let }$U$\emph{\ be any non principal ultrafilter in }$N$\emph{,
that is, a collection of infinite subsets of }$N$\emph{, closed under finite
intersection, with the property that for any }$D\subset N$\emph{, either }$D$%
\emph{\ or its complement belongs to }$U.$\emph{\ Let }$(R,m)$\emph{\ be any
discrete valuation ring, and let us denote by }$R_{w}$\emph{\ a copy of }$R$%
\emph{\ indexed by the natural }$w\in N.$\emph{\ By }$S$\emph{\ we will
denote the ultraproduct, }$S=u\lim_{w\rightarrow \infty }R_{w}.$\emph{\ We
recall that this is defined as the set of equivalent classes in the
Cartesian product }$\prod_{w\in \mathbb{N}}R_{w}$\emph{, where two
sequences }$(a_{w})$\emph{\ and }$(b_{w})$\emph{\ are regarded as equivalent
if the set of indices }$w$\emph{\ where }$a_{w}=b_{w}$\emph{\ is an element
of }$U.$\emph{\ This is a ring with the obvious operations, and it is also
local with a principal maximal ideal }$m'$\emph{\ generated by the class of }$%
(p_{w})$\emph{, where }$m_{w}=(p_{w})$\emph{\ is the maximal ideal of }$%
R_{w} $\emph{\ (see \cite[Ch. 1-2]{schoutens}). If }$c$\emph{\ denotes
the class of the sequence of powers }$(p_{w})^{w}$\emph{, then it is clear
that }$c$\emph{\ belongs to the Jacobson radical of }$S,$\emph{\ }$\cap
_{w=0}^{\infty }(m')^{w},$\emph{\ and it is a nonzero element. Consequently, }$%
S $\emph{\ cannot be Noetherian, even though its maximal ideal is finitely
generated (actually, principal), and }$S/m'$\emph{\ is a field. }
\end{example}

\section{A Characterization of a Noetherian Finite Direct Product of Fields}

In this section we will give an elementary algebraic re-formulation of the fact that a commutative Noetherian ring is the direct product of fields by means of a idempotent-membership condition, namely, the fact that any of its elements belongs lo the ideal generated by its-own square.
\begin{theorem}
Let $R$ be a commutative Noetherian ring. Then $R$ is the finite direct product of fields if and only if any element $f\in R$, holds that $f\in (f^2)$.
\end{theorem}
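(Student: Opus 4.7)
The plan is to handle the two directions separately. For the forward implication, suppose $R \cong F_1 \times \cdots \times F_n$ with each $F_i$ a field. Given $f = (f_1, \ldots, f_n) \in R$, set $g_i = f_i^{-1}$ when $f_i \neq 0$ and $g_i = 0$ otherwise; then $g_i f_i^2 = f_i$ in every coordinate, so with $g = (g_1,\ldots,g_n)$ we have $f = g f^2 \in (f^2)$.

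For the converse, I would start from the fact that $f \in (f^2)$ says $f = g f^2$ for some $g \in R$, equivalently $f(1 - gf) = 0$ (so this is just von Neumann regularity). From this single identity I plan to extract two structural consequences. First, $R$ is reduced: an easy induction gives $f = g^k f^{k+1}$ for all $k \geq 0$, so $f^n = 0$ forces $f = 0$. Second, every prime ideal $\mathfrak{p}$ of $R$ is maximal: if $f \notin \mathfrak{p}$, then $f(1-gf) = 0 \in \mathfrak{p}$ together with primality forces $1 - gf \in \mathfrak{p}$, showing that the class of $f$ is invertible in $R/\mathfrak{p}$, so $R/\mathfrak{p}$ is a field.

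Now the Noetherian hypothesis is used once, to supply only finitely many minimal primes $\mathfrak{p}_1, \ldots, \mathfrak{p}_n$. By the second consequence these are simultaneously maximal, so they exhaust $\Spec(R)$ and are pairwise comaximal. Reducedness then yields $\mathfrak{p}_1 \cap \cdots \cap \mathfrak{p}_n = \sqrt{(0)} = (0)$, and the Chinese Remainder Theorem finishes the proof:
\[
R \;\cong\; R/\mathfrak{p}_1 \times \cdots \times R/\mathfrak{p}_n,
\]
a finite direct product of fields.

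The substantive step is the simultaneous derivation of reducedness and Krull-dimension zero from the idempotent-membership condition; once both are available, the decomposition is essentially formal. The only conceivable obstacle, if one wishes to remain in the elementary spirit of the paper, is to avoid appealing to the Artinian structure theorem, but the CRT route above sidesteps that issue entirely.
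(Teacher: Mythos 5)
Your proof is correct, but it takes a genuinely different route from the paper's. The paper argues the converse by contraposition: assuming $R$ is not a finite product of fields, it uses the Noetherian hypothesis to split $\Spec R$ into finitely many connected components, applies the Chinese Remainder Theorem to reduce to a connected factor that is not a field, and there exhibits a nonzero non-unit $f$ (necessarily non-idempotent, since connectedness forces the only idempotents to be $0$ and $1$) satisfying $f\notin (f^2)$ --- because $f=cf^2$ would make $cf$ a nontrivial idempotent. You instead argue directly: from $f=gf^2$ you extract that $R$ is reduced and that every prime is maximal, then invoke the Noetherian hypothesis only to get finitely many minimal primes, which are therefore all of $\Spec R$, pairwise comaximal, with intersection equal to the zero nilradical; CRT then assembles the product. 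Both arguments are elementary and both lean on CRT plus a finiteness consequence of the Noetherian condition (finitely many connected components versus finitely many minimal primes). Your version is the more standard ``von Neumann regular $+$ Noetherian $\Rightarrow$ finite product of fields'' argument and is arguably cleaner, since it avoids the contradiction framework and the idempotent bookkeeping; the paper's version has the mild virtue of isolating the single witness $f\notin(f^2)$ explicitly, which makes the failure of the condition concrete. One small point worth making explicit in your write-up: every prime contains a minimal prime, which is why maximality of the minimal primes lets you conclude that they exhaust $\Spec R$.
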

\begin{proof}
If $R$ is a finite product of fields, then clearly the desired condition is satisfied, since any element in $R$ is the direct product of zeros and unities.

Conversely, let us assume, by contradiction, that $R$ is a Noetherian ring which is not a finite product of fields. We want to prove 
that there is an element $f\in R$ such that $f\notin (f^2)$. 
 In fact, we can reduced to the case of ${\Spec}R$ connected, because if ${\rm Spec}R$ is not connected then, due to the Noetherian hypothesis, we
 can write ${\rm Spec}R=V(Q_1)\uplus\cdots\uplus V(Q_s)$, where $V(Q_j)\cong {\rm Spec}(R/Q_j)$ are the connected components of ${\rm Spec}R$.
 Hence, by the Chinese Remainder Theorem \cite{atimac}, $R\cong \prod_{i=1}^sR/Q_i$ and by the previous assumption at least one of the $R/Q_i$ is not a field. So, it is enough to find an $f_i\in R/Q_i$ such that $f_i\notin (f_i^2)$ to obtain the desired element $f=(0,...,f_i,...,0)\in R$.
Now, the connectedness of ${\rm Spec}R$ it is equivalent to saying that the only idempotents
 of $R$ are trivial ones, namely, zero and one (see for example \cite[Ch. 2]{hartshorne}).

Lastly, choose $f\in R$ neither a unit nor idempotent. Then, $f\notin (f^2)$. In fact, by contradiction, if $f=cf^2$, for some $c\in R$, and
 so $cf(1-cf)=0$, which means that $cf$ is idempotent. Hence, $cf=0$ or $cf=1$. In the first case we have $f=(cf)f=0$,
 and in the second case, $f$ is a unit. Then both cases contradicts our hypothesis on $f$.
 \end{proof}

\section*{Acknowledgements}
The authors want to thank the Universidad Nacional of Colombia for its support. In addition, Danny A. J. G\'omez-Ram\'irez specially thanks Holger Brenner for all the inspiring discussions during the preparation of this work. D. A. J. G\'omez-Ram\'irez was supported by the Vienna Science and Technology Fund (WWTF) as part of the Vienna Research Group 12-004. Finally, he thanks deeply to Rubents Ram\'irez for being \emph{nuestro milagro de vida} and for all the inspiration, to Veronica Ramirez for her unconditional support and to J. Quintero for (the determination of) being there.



\end{document}